\setlist[itemize]{leftmargin=*} 
\theoremstyle{plain}
\newtheorem*{thm*}{Theorem}
\newtheorem{thm}{Theorem}
\Crefname{thm}{Theorem}{Theorems}
\newtheorem*{lem*}{Lemma}
\newtheorem{lem}[thm]{Lemma}
\Crefname{lem}{Lemma}{Lemmas}
\newtheorem*{claim*}{Claim}
\crefname{claim}{Claim}{Claims}
\Crefname{claim}{Claim}{Claims}
\Crefname{prop}{Proposition}{Propositions}
\newtheorem{cor}[thm]{Corollary}
\crefname{cor}{Corollary}{Corollaries}
\crefname{conj}{Conjecture}{Conjectures}
\Crefname{qn}{Question}{Questions}
\Crefname{obs}{Observation}{Observations}
\Crefname{ex}{Example}{Examples}
\theoremstyle{definition}
\Crefname{prob}{Problem}{Problems}
\Crefname{defn}{Definition}{Definitions}
\theoremstyle{remark}
\renewenvironment{proof}[1][]{\begin{trivlist}
\item[\hspace{\labelsep}{\bf\noindent Proof#1.\/}] }{\qed\end{trivlist}}
\newcommand{\remove}[1]{}
\newcommand{\RN}{r} 
\newcommand{\RE}{\overrightarrow{r}} 
\newcommand{\eps}{\varepsilon}
\newcommand{\dirpath}[1]{\overrightarrow{P_{#1}}}
\begin{document}


\title{The oriented size Ramsey number of directed paths}

\author{
    Shoham Letzter\thanks{
        ETH Institute for Theoretical Studies,
        ETH,
        8092 Zurich;
        e-mail: \texttt{shoham.letzter}@\texttt{eth-its.ethz.ch}.
    }
    \and
	Benny Sudakov\thanks{
	    Department of Mathematics, 
	    ETH, 
	    8092 Zurich;
	    e-mail: \texttt{benjamin.sudakov}@\texttt{math.ethz.ch}.     
		Research supported in part by SNSF grant 200021-175573.
	}
}

\date{}
\maketitle

\begin{abstract}

	\setlength{\parskip}{\medskipamount}
    \setlength{\parindent}{0pt}
    \noindent

	An \emph{oriented graph} is a directed graph with no bi-directed
	edges, i.e.\ if $xy$ is an edge then $yx$ is not an edge.
	The \emph{oriented size Ramsey number} of an oriented graph $H$, denoted by
	$\RE(H)$, is the minimum
	$m$ for which there exists an oriented graph $G$ with $m$ edges, such
	that every $2$-colouring of $G$ contains a monochromatic copy of $H$.

	In this paper we prove that the oriented size Ramsey number of the
	directed paths on $n$ vertices satisfies $\RE(\dirpath{n})
	= \Omega(n^2 \log n)$.  This improves a lower bound by Ben-Eliezer,
	Krivelevich and Sudakov. It also matches an upper bound by Buci\'c and
	the authors, thus establishing an asymptotically tight bound on
	$\RE(\dirpath{n})$.

	We also discuss how our methods can be used to improve the best known
	lower bound of the $k$-colour version of $\RE(\dirpath{n})$.

\end{abstract}
\section{Introduction}
	Given graphs $G$ and $H$, we write $G \rightarrow H$ if there is a
	monochromatic copy of $H$ in every $2$-edge-colouring of $G$.  The
	\emph{size Ramsey number} of a graph $H$, denoted by $\RN(H)$, is the
	minimum number of edges in $G$ over graphs $G$ satisfying $G \rightarrow H$.
	The concept of size Ramsey numbers was introduced by Erd\H{o}s, Faudree,
	Rousseau and Schelp \cite{erdos-faudree-rousseau-schelp} in 1972, and has
	received considerable attention since. A notable example is the size
	Ramsey number of a path on $n$ vertices, which was shown by Beck
	\cite{beck} to be linear in $n$, thus disproving a conjecture of
	Erd\H{o}s \cite{erdos}.
	 
	Here we consider an analogue of size Ramsey numbers for oriented graphs
	(recall that an oriented graph is a directed graph where at most one of
	$xy$ and $yx$ is an edge for every two vertices $x$ and $y$).  The
	\emph{oriented size Ramsey number} of an oriented graph $H$, denoted by
	$\RE(H)$, is the minimum number of edges of $G$ over \emph{oriented}
	graphs $G$ satisfying $G \rightarrow H$.

	In this note, we focus on the oriented size Ramsey number of the  
	directed path on $n$ vertices, denoted by $\dirpath{n}$. 
	Unlike the undirected case, it turns out that $\RE(\dirpath{n})$
	is not linear in $n$, as shown by Ben-Eliezer, Krivelevich and Sudakov
	\cite{ben-eliezer-krivelevich-sudakov}, who established the following bounds
	(where $c_1$ and $c_2$ are positive absolute constants). 
	$$ \frac{c_1 n^2 \log n}{(\log \log n)^3} \le \RE(\dirpath{n}) \le c_2n^2
	(\log n)^2.$$

	Recently, Buci\'c and the authors 
	\cite{bucic-letzter-sudakov-random-tournaments} improved the upper bound
	to $\RE(\dirpath{n}) \le c_3 n^2\log n$, by establishing a lower bound on
	the longest monochromatic path in $2$-coloured random tournaments,
	thereby bringing the upper and lower bounds very close together. The main aim of
	this note is to obtain a matching lower bound on $\RE(\dirpath{n})$, thus
	showing that $$\RE(\dirpath{n}) = \Theta(n^2\log n).$$ We achieve our aim
	in the following theorem. 

	\begin{thm} \label{thm:main}
		Let $G$ be a directed graph with at most $n^2 \log n$ edges. Then $G$
		can be $2$-coloured such that all monochromatic directed paths have
		length at most $169 n$.
	\end{thm}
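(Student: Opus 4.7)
My plan is to apply the probabilistic method with a random 2-colouring derived from a uniformly random linear ordering of $V(G)$. Specifically, let $\sigma$ be a uniformly random permutation of $V(G)$, and colour each edge $(u, v)$ red if $\sigma(u) < \sigma(v)$ and blue otherwise. For any fixed directed path $v_0 \to v_1 \to \cdots \to v_L$ with $L + 1$ distinct vertices, the probability it is monochromatic is exactly $2/(L+1)!$, since red monochromaticity corresponds to $\sigma$ being increasing along the path and blue to $\sigma$ being decreasing. Consequently, the expected number of monochromatic directed paths of length $L := 169n$ in $G$ equals $2 P_L(G)/(L+1)!$, where $P_L(G)$ denotes the number of directed paths of length $L$ in $G$. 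If $P_L(G) < (L+1)!/2$, then some realisation of $\sigma$ yields a colouring as required.

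The main task is to bound $P_L(G)$ in terms of $|E(G)| \le n^2 \log n$. The plan is to partition the vertices by out-degree: for each $k \ge 0$, set $V_k := \{v : 2^k \le d^+(v) < 2^{k+1}\}$, so that $|V_k| \le n^2 \log n \cdot 2^{-k}$ by a handshake identity. Each length-$L$ walk in $G$ is then assigned an \emph{out-degree profile} $(k_0, \dots, k_{L-1})$ indicating the class of each of its first $L$ vertices, and the number of walks with a fixed profile is at most $|V_{k_0}| \prod_{j = 0}^{L - 1} 2^{k_j + 1}$. Summing over all profiles, while exploiting $\sum_k |V_k| \cdot 2^k \le |E(G)|$ via a weighted AM-GM or multinomial argument, I aim to reach a bound of the form $P_L(G) \le |E(G)| \cdot (L/e - \delta)^{L-1}$ for some fixed $\delta > 0$. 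Combined with the estimate $L! \ge (L/e)^L$, this gives $P_L(G) < (L+1)!/2$.

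The main obstacle is extracting the explicit constant $169$. The factor $\log n$ in $|E(G)| = n^2 \log n$ must be absorbed into the $(L/e)^L$ term; this is possible because only out-degree classes with $k = O(\log n)$ can be non-empty (since we may remove isolated vertices and then $d^+(v) \le |V(G)| \le 2|E(G)|$), and their sizes decay geometrically in $k$. A careful optimisation, trading the small sizes of high-$k$ classes against their large branching contribution, should pin down $L = 169n$ as the threshold at which the inequality first holds. A further subtlety is that the profile-based bound overcounts, since $v_{j+1}$ is only required to be an out-neighbour of $v_j$ rather than specifically to lie in $V_{k_{j+1}}$; this slack must either be eliminated by a finer count or absorbed into the final constant. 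I expect this concrete optimisation, rather than the probabilistic setup itself, to be the main technical content of the proof.
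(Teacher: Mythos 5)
Your approach is genuinely different from the paper's, but it has a fatal flaw: the colouring you propose provably fails on some inputs, and the counting bound you hope to establish is false. The decisive counterexample is a random tournament $T$ on $N = n\sqrt{2\log n}$ vertices, which has roughly $n^2\log n$ edges (this is essentially the extremal example behind the matching upper bound, so any correct proof must handle it). Fix \emph{any} linear order $\sigma$ of $V(T)$ and colour as you propose. The red graph consists of the forward edges of $T$, and since $T$ is a random tournament, each forward pair is a red edge independently with probability $1/2$. A greedy argument (from the current vertex, move to its first red out-neighbour later in the order; the gap to it is geometric with mean $2$) shows that with high probability the red graph contains a directed path on about $N/2 = \Omega(n\sqrt{\log n})$ vertices --- far longer than $169n$. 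So the random-order colouring fails with high probability on this input, and no first-moment computation can rescue it. Consistently with this, the bound $P_L(G)\le |E(G)|\,(L/e-\delta)^{L-1}$ you aim for is false: in the tournament above (or in a circulant oriented graph with out-degree $d\approx 0.7\, n\sqrt{\log n}$), the number of directed paths of length $L=169n$ is at least roughly $(c\, n\sqrt{\log n})^{L}$, which exceeds $(L+1)!/2\approx (62n)^{L+1}$ once $\log n$ is a sufficiently large constant. The underlying reason is that a single global order allows a monochromatic path to collect a constant fraction of all the vertices it could visit, and the graph may have $\Theta(n\sqrt{\log n})$ vertices of high degree --- too many by a factor of $\sqrt{\log n}$.

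The paper's proof is structural rather than probabilistic, and it saves the $\sqrt{\log n}$ factor exactly where your argument loses it. Using an Erd\H{o}s--Szemer\'edi-type lemma (\Cref{lem:special-sets}), the vertex set is partitioned into $O(n/\sqrt{\log n})$ \emph{special} sets (acyclic sets whose components have order at most $\log n$) plus a remainder of at most $n$ vertices (\Cref{lem:step-one}). Edges between special sets are coloured by the order of the parts (red forward, blue backward), while inside each special set a second, local order-based colouring guarantees that any monochromatic path meets it in at most $\sqrt{\log n}$ vertices. A monochromatic path therefore collects at most $\sqrt{\log n}$ vertices from each of the $O(n/\sqrt{\log n})$ parts, giving the bound $169n$. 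If you wish to retain a probabilistic flavour, you would at a minimum need a colouring distribution built on such a two-level decomposition; a uniformly random single order cannot succeed.
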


	We prove \Cref{thm:main} in the next section and conclude the paper in
	\Cref{sec:conclusion} with a discussion of a generalisation to more
	colours.
	Throughout the paper we omit floor and ceiling signs whenever they are
	not crucial, and all logarithms are in base $2$.

\section{The proof}

	In our proof of \Cref{thm:main}, we follow the footsteps of Ben-Eliezer,
	Krivelevich and Sudakov \cite{ben-eliezer-krivelevich-sudakov}. The main
	difference is an improvement on their main tool in their proof of the lower
	bound, presented in \Cref{lem:special-sets} below. 

	Before stating
	the lemma we make a definition.
	We call a set $U$ of vertices in a directed graph \emph{$k$-special} if it is
	acyclic and its components (in the underlying graph) have order at most
	$k$. 
	
	In \cite{ben-eliezer-krivelevich-sudakov}, the authors proved that an
	oriented graph on $n$ vertices with at most $\eps n^2$ edges contains an
	acyclic subset of size at least $\frac{c\log n}{\eps \log(1/\eps)}$. It
	turns out that the proof of this statement, which is a directed version
	of a lemma by Erd\H{o}s and Szemer\'edi \cite{erdos-szemeredi}, can be
	adapted to give the following stronger statement which ensures the
	existence of a large special (and, in particular, acyclic) subset.

	\begin{lem} \label{lem:special-sets}
		Let $G$ be an oriented graph with $n$ vertices and at most $\eps n^2$
		edges,
		where $\eps > 1/n$. Then there is a $(\log n)$-special set of size at least
		$\frac{\log n}{20 \eps \log(1/\eps)}$.
	\end{lem}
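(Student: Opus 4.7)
The plan is to adapt the iterative scheme of Erd\H{o}s and Szemer\'edi (in the directed form used by Ben-Eliezer, Krivelevich, and Sudakov for large acyclic subsets) so that the resulting set additionally has small components in the underlying graph.

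As a preliminary reduction, I would restrict attention to a subgraph with bounded underlying degree: since the sum of underlying degrees is at most $2\eps n^2$, at most $n/2$ vertices have underlying degree above $4\eps n$, and discarding them leaves a subgraph $G'$ with $|V(G')| \ge n/2$ and maximum underlying degree $4\eps n$. All subsequent work happens inside $G'$.

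I would then construct $U$ as a disjoint union of ``pieces'' $S_1, S_2, \ldots$, each piece acyclic and non-adjacent in the underlying graph of $G'$ to the earlier pieces. The non-adjacency guarantees that components of $U$ in the underlying graph live inside individual pieces (so enforcing the $\log n$ bound reduces to bounding the size of each piece) and that no cycle crosses between pieces (so $U$ is acyclic since each piece is). At each round I would work inside the \emph{pool}, i.e.\ the set of vertices of $G'$ not yet used and not adjacent to the current $U$. In the pool I find a new piece via the standard random-permutation construction: pick a uniform ordering $\pi$ and keep every vertex whose $\pi$-label is smaller than that of all of its out-neighbours; summing the local probabilities $1/(d^+(v)+1)$ shows that this yields an acyclic piece of expected size of order $1/\eps$, provided the pool's edge density remains $\Theta(\eps)$. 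If a piece has more than $\log n$ vertices, I would subsequently split it into sub-components of size at most $\log n$, which is feasible because a random-permutation piece inside a pool of density $\eps$ is sparse enough for standard tree-breaking techniques to lose only a constant fraction.

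The main obstacle is to run enough rounds to reach the target total size $\log n/(20 \eps \log(1/\eps))$ while tracking the pool's size and density across rounds. A heuristic calculation indicates that with piece size $\Theta(1/\eps)$ each vertex of the pool has a constant probability of being in the piece or adjacent to it, so the pool should shrink by a constant factor per round, yielding $\Theta(\log n)$ rounds and $\Theta(\log n/\eps)$ vertices in total, comfortably above the claimed bound. Making this rigorous requires showing that the pool's edge density does not blow up as vertices are removed (so that the random-permutation argument keeps producing pieces of the expected size), that the splitting step preserves a positive fraction of each piece, and that the ``non-adjacent to $U$'' restriction on the pool does not deplete it faster than the analysis predicts. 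Balancing these ingredients---especially in the regime where $1/\eps$ exceeds $\log n$, so pieces must be actively split---is the delicate part of the proof.
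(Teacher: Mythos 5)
There is a genuine gap, and it is exactly at the point you flag as ``the delicate part.'' Your construction insists that each new piece be non-adjacent (in the underlying graph) to \emph{all} of the current set $U$, and the analysis rests on the pool shrinking by only a constant factor per round. But a piece of size $\Theta(1/\eps)$ in a graph of maximum underlying degree $4\eps n$ can have a neighbourhood of size up to $(1/\eps)\cdot 4\eps n = 4n$, i.e.\ the entire pool. Nothing in your argument rules out a worst-case graph in which every likely output of the random-permutation step, together with its neighbourhood, swallows the whole pool in $O(1)$ rounds, leaving you with only $O(1/\eps)$ vertices --- far short of $\frac{\log n}{20\eps\log(1/\eps)}$ when $\log n \gg \log(1/\eps)$. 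The ``constant probability of being in the piece or adjacent to it'' heuristic is an average-case (random-graph) calculation, and a greedy iteration of this kind cannot be pushed through against an adversarial graph without a new idea. A secondary warning sign is that your heuristic outputs $\Theta(\log n/\eps)$, beating the lemma's bound by a factor of $\log(1/\eps)$; in the paper's argument that factor is forced by an entropy/pigeonhole count and is not an artefact.

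The missing idea, which is the heart of the Erd\H{o}s--Szemer\'edi scheme as adapted in the paper, is to \emph{not} require the new piece to avoid the neighbourhood of all of $U$, but only of $U\setminus S$ for a small sacrificial set $S$ that is then swapped out. Concretely: take $U$ to be a \emph{maximum} special set, observe that at least $n/20$ vertices $w$ outside $U$ have fewer than $10\eps|U|$ neighbours in $U$, record for each such $w$ a set $S_w\subseteq U$ of size exactly $10\eps|U|$ containing those neighbours, and note that the number of possible traces $S_w$ is $\binom{|U|}{10\eps|U|}\le\sqrt n$ precisely because $|U|<\frac{\log n}{20\eps\log(1/\eps)}$ (this is where the $\log(1/\eps)$ enters). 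Pigeonholing gives $\ge\sqrt n/20$ vertices sharing one trace $S$; the trivial bound (\Cref{lem:trivial-bound}) extracts from them an acyclic set $W''$ of size $\frac12\log n$ with no edges to $U\setminus S$, and $(U\setminus S)\cup W''$ is a strictly larger special set --- a contradiction. This one-shot extremal swap sidesteps entirely the pool-depletion and density-drift issues that your iterative scheme would have to control. If you want to salvage your approach, you would need to build this ``pay a small set $S$ to admit the new piece'' mechanism into each round, at which point you are essentially reproving the lemma the paper's way.
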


	In the proof of \Cref{lem:special-sets} we shall need the following simple
	lemma. Its proof follows by induction from the fact that every oriented
	$m$-vertex graph has
	a vertex with out-degree at most $m/2$.

	\begin{lem} \label{lem:trivial-bound}
		Every oriented graph on $m$ vertices contains an acyclic subset of size at
		least $\log m$.
	\end{lem}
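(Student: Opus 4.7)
The plan is to prove \Cref{lem:trivial-bound} by induction on $m$, exploiting the out-degree fact highlighted in the statement. For the base case $m = 1$, a single vertex is trivially acyclic and satisfies $1 \ge \log 1 = 0$. For the inductive step, assume the result for all smaller values and let $G$ be an oriented graph on $m \ge 2$ vertices. Since $G$ has at most $\binom{m}{2}$ edges, the average out-degree is at most $(m-1)/2$, so some vertex $v$ has out-degree at most $(m-1)/2$.

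Let $W$ be the set of vertices that are neither $v$ nor out-neighbours of $v$, so that $|W| \ge (m-1)/2$. Applying the inductive hypothesis to the oriented subgraph $G[W]$ yields an acyclic subset $S \subseteq W$ of size at least $\log |W|$. The key observation is that $S \cup \{v\}$ is acyclic: by construction $v$ has no out-edge into $W$, and because $G$ is oriented, every edge between $v$ and a vertex of $S$ must point into $v$. Hence any topological order of $G[S]$ extends to one of $G[S \cup \{v\}]$ by appending $v$ at the end. This produces an acyclic subset of size at least $\log |W| + 1 \ge \log m$.

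The only subtle point is the floor/ceiling bookkeeping, since $|W| \ge (m-1)/2$ is slightly weaker than $|W| \ge m/2$ (so the inequality $\log|W| + 1 \ge \log m$ is not quite immediate when $m$ is odd). The cleanest remedy is to prove the very slightly strengthened statement that the maximum acyclic subset has size at least $\lceil \log(m+1) \rceil$; under this phrasing one has $|W| + 1 \ge \lceil (m+1)/2 \rceil$, which closes the induction exactly, and the original lemma follows at once since $\lceil \log(m+1) \rceil \ge \log m$.
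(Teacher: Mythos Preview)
Your proof is correct and follows exactly the inductive approach the paper hints at; the paper only records that the proof ``follows by induction from the fact that every oriented $m$-vertex graph has a vertex with out-degree at most $m/2$'' and gives no further details. Your explicit handling of the floor/ceiling bookkeeping via the strengthened bound $\lceil \log(m+1) \rceil$ is a clean way to close the induction that the paper's one-line sketch leaves implicit.
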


	\begin{proof}[ of \Cref{lem:special-sets}]
		We may assume that $\eps < 1/8$, otherwise the proof follows from
		\Cref{lem:trivial-bound}. Remove all vertices whose degree (in the
		underlying graph) is at least $4\eps n$, and denote the resulting
		graph by $G'$; note that $G'$ has at least $n - \frac{2\eps n^2}{4
		\eps n} =  n/2$ vertices.

		Let $U$ be a maximum $(\log n)$-special set in $G'$ (from now on, we call
		such sets \emph{special}). We may assume that $|U| < \frac{\log n}{20\eps
		\log(1/\eps)}$, and, since $\eps > 1/n$, also $|U| < n/20$.

		The number of edges between $U$ and $V(G') \setminus U$ is at most $4\eps
		n|U|$. Hence, the number of vertices in $V(G') \setminus U$ which have at
		least $10\eps |U|$ neighbours in $U$, is at
		most $2n/5$.  Therefore, the set $W$, of vertices outside of $U$ that have
		fewer than $10\eps |U|$ neighbours in $U$, has size at least $n/2 - |U| -
		2n/5 \ge n/20$.

		For each vertex $w \in W$, let $S_w$ be a subset of $U$ of size exactly
		$10\eps |U|$ that contains all the neighbours of $w$ in $U$. 
		Note that the number of possible sets $S_w$ is at most the following
		(using $\binom{n}{k} \le \left(\frac{en}{k}\right)^k$).
		$$ \binom{|U|}{10\eps |U|} 
		\le \left( \frac{e}{10\eps} \right)^{10\eps
		|U|} 
		\le 2^{\frac{\log n}{2\log(1/\eps)}\log (e/10\eps)} 
		\le \sqrt{n}.$$

		Hence, there is a subset $W'$ of $W$ of size at least $|W| / \sqrt{n}
		\ge \sqrt{n}/20$, for which $S_w$ is the same for all $w \in W'$.  By
		\Cref{lem:trivial-bound}, there is an acyclic subset $W''$ of $W'$
		whose size is at least $\frac{1}{2}\log n$.  Write $U' = (U \setminus
		S) \cup W''$, where $S = S_w$ for some $w \in W'$.  There are no
		edges between $U \setminus S$ and $W''$, hence, since $U$ and $W''$
		are acyclic, so is $U''$.  Furthermore, the components in $U'$ are
		contained in either $U$ or in $W''$, thus they have order at most
		$\log n$. It follows that $U'$ is special.  Finally, we have the
		following (note that $\eps < 1/2$).  
		$$|U'| \ge |U| - 10\eps |U| + \frac{\log n}{2} \ge |U| -
		\frac{\log n}{2\log 1/\eps} +  \frac{\log n}{2} > |U|.$$ 
		This is a
		contradiction to the choice of $U$ as a maximum special set.  The
		lemma follows.
	\end{proof}

	Before turning to the proof of \Cref{thm:main}, we mention the following 
	lemma.
	\begin{lem} \label{lem:colour-tournaments}
		Every acyclic graph on $m$ vertices can be $2$-edge-coloured such that
		every monochromatic directed path has length at most $\sqrt{m}$.
	\end{lem}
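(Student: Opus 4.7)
The plan is to exploit the fact that an acyclic directed graph admits a topological ordering, and then use a two-coordinate labelling trick (essentially writing the indices in base $\sqrt{m}$) to split the edges into two colour classes, each of which forbids long increasing chains.

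Concretely, fix a topological ordering $v_1, v_2, \ldots, v_m$ of the given acyclic graph, so that every edge points from some $v_i$ to some $v_j$ with $i < j$. Let $s = \lceil \sqrt{m}\, \rceil$ and write each index $i \in \{1, \ldots, m\}$ in the form $i = a_i s + b_i$ with $a_i, b_i \in \{0, 1, \ldots, s-1\}$, so that the map $i \mapsto (a_i, b_i)$ is increasing in the lexicographic order. For a directed edge from $v_i$ to $v_j$ (with $i<j$), colour it \emph{red} if $a_i < a_j$ and \emph{blue} if $a_i = a_j$ (in which case $b_i < b_j$ is forced).

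With this colouring, any red directed path $v_{i_1} \to v_{i_2} \to \cdots \to v_{i_k}$ satisfies $a_{i_1} < a_{i_2} < \cdots < a_{i_k}$, so $k \le s \le \sqrt{m}+1$; any blue directed path satisfies $a_{i_1} = \cdots = a_{i_k}$ together with $b_{i_1} < b_{i_2} < \cdots < b_{i_k}$, giving the same bound. Hence every monochromatic directed path has at most $\sqrt{m}$ edges (adjusting constants as needed, since the statement permits this rounding).

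There is essentially no obstacle: the acyclicity hypothesis is used only to obtain a topological ordering, and once that is in hand the base-$\sqrt{m}$ labelling is a direct construction. The only mild subtlety is verifying that both coordinates are bounded by $\sqrt{m}$ after the obvious rounding, which is immediate from $s = \lceil \sqrt{m}\, \rceil$ and is the kind of floor/ceiling issue the paper explicitly permits us to ignore.
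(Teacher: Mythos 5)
Your proposal is correct and is essentially identical to the paper's proof: the base-$\sqrt{m}$ digit $a_i$ is just the index of the block $U_{a_i+1}$ of $\sqrt{m}$ consecutive vertices in the topological order, and your red/blue classes (edges changing versus preserving the first digit) are exactly the paper's between-block and within-block edges, with the colours swapped. The monotonicity argument bounding each colour class is the same, and the rounding slack you flag is the same floor/ceiling issue the paper waves away.
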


	\begin{proof}
		Let $G$ be an acyclic graph on $m$ vertices. Let $v_1, \ldots, v_m$ be an
		ordering of the vertices of $G$ such there are no edges $v_i v_j$ with $i
		> j$. Define $U_k = \{v_{1 + (k-1)\sqrt{m}},\ldots,v_{k\sqrt{m}}\}$ for
		$k \in [\sqrt{m}]$ (we
		assume that $\sqrt{m}$ is integer for simplicity).
		Colour edges inside the $U_i$'s red and edges between the $U_i$'s blue.
		It is easy to see that every monochromatic directed path has length at most
		$\sqrt{m}$.
	\end{proof}

	The following corollary easily follows: colour each of the components of a
	$k$-special set using the colouring described in
	\Cref{lem:colour-tournaments}.

	\begin{cor} \label{cor:colour-special-sets}
		The edges of a $k$-special set can be $2$-coloured such that
		monochromatic paths have length at most $\sqrt{k}$.
	\end{cor}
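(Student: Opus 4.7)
The plan is to follow the hint given just before the corollary: exploit the fact that, by definition of a $k$-special set $U$, the underlying graph of $U$ splits into connected components $C_1, \dots, C_t$, each of which induces an acyclic directed subgraph on at most $k$ vertices. For each component $C_i$, apply \Cref{lem:colour-tournaments} to its induced directed subgraph (which is acyclic on $|C_i| \le k$ vertices) to obtain a $2$-edge-colouring under which every monochromatic directed path inside $C_i$ has length at most $\sqrt{|C_i|} \le \sqrt{k}$. Combining these colourings yields a $2$-edge-colouring of the whole edge set of $U$.

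It remains to argue that no monochromatic directed path within $U$ is longer than $\sqrt{k}$. The key observation is that any directed path in $U$, viewed in the underlying undirected graph, is a connected subgraph and therefore lies entirely inside a single component $C_i$. Hence any monochromatic directed path in $U$ is in fact a monochromatic directed path inside one of the $C_i$, and so has length at most $\sqrt{k}$ by the choice of colouring.

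I do not expect a real obstacle here; the only point that needs a brief check is that a directed path in a directed graph is connected in the underlying graph (so it cannot jump between different underlying components), which is immediate. The corollary therefore reduces cleanly to \Cref{lem:colour-tournaments} applied component-by-component.
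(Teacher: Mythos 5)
Your proposal is correct and matches the paper's argument exactly: the paper also obtains the corollary by applying \Cref{lem:colour-tournaments} to each component of the $k$-special set, relying on the fact that a directed path cannot leave a component of the underlying graph. No issues.
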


	We are now ready for the proof of \Cref{thm:main}.

	\begin{proof}[ of \Cref{thm:main}]
		Our aim is to partition $G$ into not too many  
		special sets (i.e.\ $(\log n)$-special) and a small remainder.
		The main tool is the following claim.
		\begin{lem} \label{lem:step-one}
			Let $G$ be an oriented graph  with at most $n^2 \log n$ edges.
			Then the vertices of $G$ can be
			partitioned into at most $\frac{160 n}{\sqrt{\log n}}$ special
			sets, and a remainder of at most $8 n \sqrt{\log n}$ vertices.
		\end{lem}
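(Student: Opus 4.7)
The plan is to partition $V(G)$ by iteratively extracting large special sets via \Cref{lem:special-sets}. We may assume $|V(G)| \le 2n^2 \log n$, since isolated vertices can be appended to any special set at the end as components of size $1$. Starting from $G_0 := G$, while the current graph $G_i$ has $N_i > 8n\sqrt{\log n}$ vertices, I would apply \Cref{lem:special-sets} to $G_i$ with density $\eps_i := n^2 \log n / N_i^2 \le 1/64$, extracting a $(\log N_i)$-special set $U_i$ of size at least $\log N_i / \bigl(20 \eps_i \log(1/\eps_i)\bigr)$ and forming $G_{i+1} := G_i - U_i$. The halting condition yields the required remainder bound directly, so the task reduces to bounding the number of iterations.

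To carry out the count, write $N_i = n\sqrt{\log n} \cdot t_i$, so that $\eps_i = 1/t_i^2$ and $\log(1/\eps_i) = 2\log t_i$, and hence $|U_i| \ge t_i^2 \log n / (40 \log t_i)$ (using $\log N_i \ge \log n$). I would then group iterations into dyadic bins $t_i \in [2^k, 2^{k+1})$: in such a bin at most $2^k n\sqrt{\log n}$ vertices are removed in total, while each iteration removes at least $4^k \log n /(40 k)$ of them, giving at most $40 k n/(2^k \sqrt{\log n})$ iterations per bin. Summing $\sum_{k \ge 3} k/2^k = O(1)$ then yields $O(n/\sqrt{\log n})$ iterations in total, comfortably below $160 n/\sqrt{\log n}$.

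The main technical obstacle I expect is that the extracted sets are only $(\log N_i)$-special, whereas we need $(\log n)$-special sets: since $N_i$ can be as large as $2n^2 \log n$, components of $U_i$ may have size up to $3 \log n$. To resolve this, I would split each $U_i$ into three $(\log n)$-special subsets by ordering each acyclic component of $G[U_i]$ topologically and cutting it into three chunks of size at most $\log n$; as any edge of $G[U_i]$ lies within a single component, each chunk still induces components of size at most $\log n$ and remains acyclic. This triples the count but leaves it below the target $160 n / \sqrt{\log n}$. A related minor subtlety, that the hypothesis $\eps_i > 1/N_i$ of \Cref{lem:special-sets} narrowly fails when $N_i > n^2 \log n$, can be handled in the initial steps by inflating $\eps_i$ slightly to $\Theta(1/N_i)$, which still yields a special set of size $\Omega(N_i)$ and costs only $O(1)$ extra iterations.
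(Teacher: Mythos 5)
Your proposal follows essentially the same route as the paper: iteratively extract large special sets via \Cref{lem:special-sets}, group the iterations into dyadic stages according to the current vertex count $N_i$ (the paper's $\alpha_i$ is your $t_i$ rounded to powers of two), and sum the resulting series $\sum_k k/2^k$. You are in fact more careful than the paper on two points it silently glosses over --- the extracted sets being only $(\log N_i)$-special rather than $(\log n)$-special, and the hypothesis $\eps > 1/N$ of \Cref{lem:special-sets} when there are many (e.g.\ isolated) vertices --- at the price of slightly loose bookkeeping elsewhere ($\log t_i \le k+1$ rather than $k$, and each dyadic bin costs one iteration beyond the width-based count), which still fits under $160n/\sqrt{\log n}$.
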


		\begin{proof}
			Our plan is very simple: we remove, one by one, a special set of
			maximum size, until we remain with at most $8 n \sqrt{\log n}$ vertices.
			In order to show that the number of special sets removed in such a
			process is not too large, we divide the process into stages.

			Let $\alpha$ be such that the number of vertices of $G$ is
			$\alpha n \sqrt{\log n}$; note that we may assume $\alpha > 8$,
			as otherwise we are done trivially.  Write $\alpha_i = \alpha /
			2^i$, for $0 \le i \le I$, where $I$ is smallest for which
			$\alpha_I \le 8$ holds (so $\alpha_I \ge 4$).  The first stage is
			the first part of the process described above, where special sets
			of maximum size are removed one by one, run until the first time
			when the number of vertices drops below $\alpha_1 n \sqrt{\log
			n}$. Similarly, the $i$-th stage consists of the part of the
			process which starts right after the end of the $(i-1)$-th stage,
			and lasts until the number of vertices drops below $\alpha_i n
			\sqrt{\log n}$.

			Write $\eps_i = \frac{n^2 \log n}{(\alpha_i n\sqrt{\log n})^2} =
			1/\alpha_i^2$ .
			By \Cref{lem:special-sets}, the special sets removed in the $i$-th
			stage have size at least $\frac{\log n}{20\eps_i \log(1/ \eps_i)} =
			\frac{\alpha_i^2 \log n}{40\log \alpha_i}$.
			Since the number of vertices removed is at most $\alpha_{i-1} n
			\sqrt{\log n}$, the number of special sets removed
			in the $i$-th stage, where $i \le I$, is at most the following. 
			\begin{equation} \label{eqn:one-step}
				\frac{\alpha_{i-1} n \sqrt{\log n}}{\frac{\alpha_i^2 \log
				n}{40\log(\alpha_i)}} 
				= \frac{80\log
				\alpha_i}{\alpha_i} \cdot \frac{n}{\sqrt{\log n}}
			\end{equation}
			Note that $\alpha_i = \alpha_I \cdot 2^{I - i} \ge 4 \cdot 2^{I
			- i} = 2^{2 + I - i}$, as $\alpha_I \ge 4$. Since
			$\frac{\log x}{x}$ is decreasing for $x \ge
			e$, we have that $\frac{\log(\alpha_i)}{\alpha_i} \le
			\frac{\log(2^{2 + I - i})}{2^{2 + I - i}} = \frac{2 + I - i}{2^{2
			+ I - i}}$ for $0 \le i \le I$.
			Hence, 
			\begin{equation} \label{eqn:total}
				\sum_{0 \le i \le I} \frac{80 \log \alpha_i}{\alpha_i}
				\le \sum_{2 \le i \le I + 2} \frac{80i}{2^i}
				\le 40 \sum_{i \ge 0} (i+1)2^{-i} 
				= 40 \sum_{i \ge 0}\sum_{j \le i} 2^{-i}
				= 40 \Big( \sum_{i \ge 0} 2^{-i} \Big)^2 
				= 160.
			\end{equation}
			It follows from \eqref{eqn:one-step} and \eqref{eqn:total}
			that the total number of special sets removed in this
			process is at most $\frac{160 n}{\sqrt{\log n}}$. Also, by
			definition of $\alpha_I$, the number of vertices remaining in the
			graph is at most $8 n \sqrt{\log n}$, as required.
		\end{proof}

		By \Cref{lem:step-one}, there is a partition of the vertices of $G$ 
		into at most $\frac{160 n}{\sqrt{\log n}}$ special sets and a
		remainder $W$ of at most $8n \log n$ vertices.
		By iterating \Cref{lem:trivial-bound} we can partition $W$ into at most
		$\frac{8n}{\sqrt{\log n}}$ acyclic sets of size $\log n$ and a
		remainder of at most $n$ vertices.
		As these acyclic sets are special, we thus obtain a partition
		$\{U_1, \ldots, U_l, W'\}$ of the vertices of $G$ where $U_i$ is
		special for $i \in [l]$, $l \le \frac{168 n}{\sqrt{\log n}}$, and
		$|W'| \le n$.
		
		We
		colour the edges inside the $U_i$'s with red and blue in such a
		way that monochromatic paths inside the $U_i$'s have length at most
		$\sqrt{\log n}$; this is possible due to
		\Cref{cor:colour-special-sets}.  We then colour edges between $U_i$
		and $U_j$ red if $i < j$ and blue if $i > j$. Finally, we colour
		edges into $W'$ red, edges from $W'$ blue, and colour the
		edges inside $W'$ arbitrarily.  Any monochromatic path in this
		colouring contains at most $\sqrt{\log n}$ vertices from each $U_i$
		and at most $n$ vertices from $W'$, hence it
		has length at most $169 n$, as required.
	\end{proof}

\section{Conclusion} \label{sec:conclusion}
	We conclude this note with a discussion of a generalisation to a
	$k$-coloured setting.  We shall consider $\RE(\dirpath{n}, k+1)$, the
	$(k+1)$-colour oriented size Ramsey number of $\dirpath{n}$.  The
	following bounds follow from \cite{ben-eliezer-krivelevich-sudakov} and
	\cite{bucic-letzter-sudakov-random-tournaments}.
	$$\frac{c_1 n^{2k}(\log n)^{1/k}}{(\log \log n)^{(k+2)/k}} 
	\le \RE(\dirpath{n},k+1) 
	\le c_2n^{2k} \log n.$$
	Using our approach, it is possible to improve the lower bound and obtain the
	following.
	$$\RE(\dirpath{n}, k+1) \ge c_1n^{2k} (\log n)^{2/(k+1)}.$$
	We give only a sketch of the proof.  As in the proof of \Cref{thm:main},
	we may partition a graph with at most  $n^{2k} (\log n)^{2/(k+1)}$ edges
	into at most $\frac{c n^k (\log n)^{1/(k+1)}}{\log n}$ special sets and a
	remainder of at most $n$ vertices. Using an analogue of
	\Cref{lem:colour-tournaments}, we may colour the edges of special
	sets with $k + 1$ colours such that monochromatic paths have length at most $(\log
	n)^{1/(k+1)}$. In order to colour the edges between special sets, we use
	the fact (see, e.g.\ \cite{ben-eliezer-krivelevich-sudakov}) that a
	directed graph on $m$ vertices can be $(k+1)$-coloured (where, if both
	$xy$ and $yx$ are edges, we may use a separate colour for each of them) such that
	monochromatic paths have length at most $cm^{1/k}$ and all colour classes
	are acyclic. We thus obtain a $(k+1)$-colouring in which monochromatic
	paths have length at most $c_2n$.

	Finally, we note that the upper bound in
	\cite{bucic-letzter-sudakov-random-tournaments} was obtained by showing
	that random tournaments have, with high probability, monochromatic paths
	of the required length in every $2$-colouring of their edges. 
	It seems plausible that a similar statement holds for $(k+1)$-colourings of
	random tournaments, and perhaps a matching upper bounds on
	$\RE(\dirpath{n},k+1)$ can be proved using the methods from
	\cite{bucic-letzter-sudakov-random-tournaments}.

\subsection*{Acknowledgements}
	We would like to thank Matija Buci\'c for stimulating discussions.
	The first author would like to acknowledge the support of Dr.~Max
	R\"ossler, the Walter Haefner Foundation and the ETH Zurich Foundation.

\bibliography{pathbib}

\providecommand{\bysame}{\leavevmode\hbox to3em{\hrulefill}\thinspace}
\providecommand{\MR}{\relax\ifhmode\unskip\space\fi MR }
\providecommand{\MRhref}[2]{%
  \href{http://www.ams.org/mathscinet-getitem?mr=#1}{#2}
}
\providecommand{\href}[2]{#2}
\begin{thebibliography}{1}

\bibitem{beck}
J.~Beck, \emph{On size {R}amsey number of paths, trees, and circuits. {I}}, J.
  Graph Theory \textbf{7} (1983), no.~1, 115--129.

\bibitem{ben-eliezer-krivelevich-sudakov}
I.~Ben-Eliezer, M.~Krivelevich, and B.~Sudakov, \emph{The size {R}amsey number
  of a directed path}, J. Combin. Theory Ser. B \textbf{102} (2012), no.~3,
  743--755.

\bibitem{bucic-letzter-sudakov-random-tournaments}
M.~Buci\'c, S.~Letzter, and B.~Sudakov, \emph{Monochromatic paths in random
  tournaments}, Random Struct. Algo., to appear.

\bibitem{erdos}
P.~Erd\H{o}s, \emph{Problems and results in graph theory}, The Theory and
  Applications of Graphs, Kalamanzoo, MI, 1981, pp.~331--341.

\bibitem{erdos-faudree-rousseau-schelp}
P.~Erd\H{o}s, R.~J. Faudree, C.~C. Rousseau, and R.~H. Schelp, \emph{The size
  {R}amsey number}, Period. Math. Hungar. \textbf{9} (1978), no.~1-2, 145--161.

\bibitem{erdos-szemeredi}
P.~Erd\H{o}s and E.~Szemer\'edi, \emph{{On a Ramsey type theorem}}, Collection
  of articles dedicated to the memory of Alfréd Rényi, I, Period. Math.
  Hungar. 2, 1972, pp.~295--299.

\end{thebibliography}
\bibliographystyle{amsplain}
\end{document}